\theoremstyle{plain}
\newtheorem{theorem}{Theorem}[section]
\newtheorem{lemma}[theorem]{Lemma}
\theoremstyle{definition}
\theoremstyle{remark}
\numberwithin{equation}{section}
\numberwithin{theorem}{section}
\numberwithin{table}{section}
\numberwithin{figure}{section}
\newcommand{\R}{\mathbb R}
\newcommand{\Z}{\mathbb Z}
\newcommand{\Q}{\mathbb Q}
\def\({\left(}
\def\){\right)}
\begin{document}
\title{The second Dirichlet coefficient starts out negative}
\author{David~W.~Farmer and Sally~Koutsoliotas}
\dedicatory{Dedicated to the memory of Marvin Knopp.}
\subjclass[2010]{11F11, 11M26, 11G05}
\keywords{modular form, negative coefficient, L-function, elliptic curve}


\begin{abstract}
Classical modular forms of small weight and low level are likely to have 
a negative second Fourier coefficient.
Similarly, the labeling scheme for
elliptic curves tends to give smaller labels to the higher-rank curves.
These observations are easily made when browsing the
L-functions and Modular Forms Database, available at
http://www.LMFDB.org/.  
An explanation lies in the L-functions associated
to these objects.
\end{abstract}

\maketitle






\section{Introduction} 


Modular forms and related objects have series expansions which typically
are normalized so that the initial coefficient is~$1$, making the second
coefficient the first instance containing useful information.
We present data which show that, for several families of objects,
the ``smallest'' members in the family tend to have a negative
second coefficient.  Here ``smallest'' refers to a natural ordering
of the family.  These observations seem surprising at first, because when averaged
over the family, the second coefficient has a known (or conjectured)
distribution \cite{CDF, Se, Sa} which is equally likely to be positive or negative.
The explanation lies in the fact that we can associate an L-function to each
object in the family, and we describe a
general principle which says that the
``first''  L-function in a family is likely to have a negative
second Dirichlet coeffcient.  That principle is a straightforward application
of Weil's explicit formula.

There is a substantial literature on bounding the
location of the first negative coefficient of modular forms and
other objects \cite{MR2503991,MR2352824,MR2232011,MR2726725,MR0696516}.
The situation we discuss here, in which a negative coefficient appears
as soon as possible, should be viewed as a transient phenomenon,
and not related to the general question of the oscillation of Fourier coefficients.

In Section~\ref{sec:data} we provide data on the first few
coefficients of
holomorphic cusp forms on Hecke congruence groups,
as well as
elliptic curves of higher rank.  In many cases,
the first nontrivial coefficient is negative.
In Section~\ref{sec:explicit} we make the argument that the
\hbox{L-functions} associated to the objects are 
responsible for
this phenomenon and we use the explicit formula to justify
that conclusion.

\thispagestyle{empty}

\section{The data}\label{sec:data}

\subsection{Classical holomorphic cusp forms}

Let $S_k(N)$ denote the space of holomorphic cusp forms of weight~$k$
for the Hecke congruence group~$\Gamma_0(N)\subset PSL(2,\Z)$, and let
$S_k^{new}(N)$ denote the subspace of newforms.  The newform space has
a distinguished basis of simultaneous eigenfunctions of the Hecke
operators and Atkin-Lehner operators.  The newforms have a Fourier
expansion
$$
f(z) = \sum_{n=1}^\infty a(n)e^{2\pi i n z},
$$
where $a(1)=1$.
A standard reference on this material is Iwaniec's book~\cite{iwaniec}.

\begin{table}
\[
\begin{array}{r|ccccccccccccccc}
N=&1&2&3&4&5&6&7&8&9&10&11&12&13&14&15\\
\hline
k=2&&&&&&&&&&&\bullet&&&\bullet&\bullet\\
4&&&&&\bullet&\bullet&\bullet&-&-&\circ&2&-&3&2&2\\
6&&&\bullet&-&\circ&\circ&3&1&1&3&4&&5&2&4\\
8&&\bullet&\circ&&3&1&3&2&3&1&6&2&7&4&4\\
10&&\circ&2&1&3&1&5&2&3&3&8&1&9&4&6\\
12&\bullet&&\circ&1&3&3&5&3&4&5&8&2&11&6&8\\
\end{array}
\]

\caption{\sf \label{fig:GL2}
The signs of the $2^{nd}$ Fourier coefficients of newforms
in $S_k^{new}(N)$.  The symbol $\bullet$ represents a form with a
negative coefficient, $\circ$ a form with a positive coefficient,
$-$ a form with a zero coefficient, and a positive integer is
the dimension of the newform space.
}
\end{table}

Table~\ref{fig:GL2} provides information about the dimension of the
space~$S_k^{new}(N)$ and the signs of the Fourier coefficient $a(2)$
of the newform basis.
From Table~\ref{fig:GL2}, one can see that if $k$ and $N$ are both
small, then $S_k^{new}(N)$ is trivial, and the dimension of
$S_k^{new}(N)$ grows regularly as a function of $k$, and irregularly
as a function of~$N$.  These facts are well known.  But what was
previously not observed, and seems surprising at first, is that for most
of the newforms which are on the border of the non-zero spaces,
$a(2)$ is negative.  
The negativity of these coefficients is not a coincidence: they arise
from a simple principle involving L-functions which we describe
in Section~\ref{sec:Lfunctions}.

While not included in Table~\ref{fig:GL2},
we note that the second coefficient is negative for
$f\in S_2^{new}(N)$ for all $N\le 21$.
That is, there are four more $\bullet$ along the top row before
the first $\circ$ appears.
While this may also seem
surprising, it turns out to have a similar explanation,
which we give in Section~\ref{sec:small weight}.

%
%
%
%
%

\subsection{Elliptic curves}
Tables of elliptic curves$/\Q$ have both inspired and benefitted
from significant theoretical work.  As of this writing, all elliptic
curves of conductor less than 350,000 have been tabulated
by John Cremona, and detailed
information about them is available in the 
L-functions and Modular Forms Database (LMFDB)~\cite{lmfdb}.

An issue 
is how to label elliptic curves.  The goal is staightforward:
each curve should be given a label which specifies its conductor,
its isogeny class, and the isomorphism class within the isogeny class.
The conductor is an integer, so there is no ambiguity.  
The order in which to list the isogeny classes, however, requires a choice,
as does the order for listing the curves within an isogeny class.

Various methods for ordering the isogeny classes have been used, and
until recently the ``Cremona label'' has been the standard.
Unfortunately, the Cremona label is difficult to describe, and in some
cases cannot be derived from first principles.  See~\cite{Crem}
for details.

These shortcomings have been addressed by a
new labeling scheme, also developed by John Cremona, 
referred to as the
``LMFDB label.''  For example, the elliptic curve
$$
E\ :\ y^2 = x^3 + x^2 + 210 x + 1764
$$
has LMFDB label  ``672.e2''.  
That specific label means:
\begin{itemize}
\item{} $E$ has conductor 672,
\item{}  when the newforms with rational integer coefficients
in $S_2^{new}(672)$ are ordered lexicographically by their
Fourier coefficients, the form associated to $E$ appears 5th on the
list (because ``e'' is the 5th letter), and 
\item{}  when the elliptic curves in the isogeny class of $E$
are put in minimal Weierstrass form
$$
y^2+a_1xy+a_3y=x^3+a_2 x^2+a_4x+a_6,
$$
then $E$ appears 2nd when the $[a_1,a_2, a_3, a_4, a_6]$ are
ordered lexicographically.
\end{itemize}
Note that the modularity \cite{BCDT}  of ellipic curves$/\Q$ is necessary to 
ensure that each curve has an LMFDB label.

One of the interesting properties of an elliptic curve
is its \emph{rank}, which is the rank of the
Mordell-Weil group $E(\Q)$ of rational points on $E$.  The
rank of an elliptic curve is the subject of the 
Birch and Swinnerton-Dyer conjecture, which equates the rank with the order
of vanishing of the L-function $L(s,E)$ at the critical point $s=\frac12$.

Table~\ref{tab:ECrank} shows the LMFDB label for the isogeny
classes of the first
few elliptic curves of rank~1, along with
the number of isogeny classes with that conductor.
Note that
we give the label of an isogeny class, not an elliptic curve,
because all the curves in an isogeny class have the same
rank.

\begin{table}
\[
\begin{array}{c|ccccccccccccccc}
\text{class}&37.a&43.a&53.a&57.a&58.a&61.a&65.a&77.a&79.a&82.a&88.a&89.a\\
\hline
\#N&2&1&1&3&2&1&1&3&1&1&1&2\\
\end{array}
\]
\phantom{x}
\[
\begin{array}{cccccccccccccccc}
91.a \text{ and } 91.b&92.a&99.a&101.a&102.a&106.a&112.a&117.a&118.a&121.b\\
\hline
2&2&4&1&3&4&3&1&4&3\\
\end{array}
\]

\caption{\sf \label{tab:ECrank}
The LMFDB labels of the isogeny classes of the first 23
elliptic curves of rank 1, and the number of isogeny classes
$\#N$
with conductor~$N$.
}
\end{table}

For the entries in Table~\ref{tab:ECrank} where there is a
single isogeny class of a given conductor, then of course that
class has to be given the label ``$a$.''
But what seems surprising at first is that even when there
are other isogeny classes, the entries in Table~\ref{tab:ECrank}
almost all have the label ``$a$.''
In fact, the first 11 times there was a rank 1 curve and also another
isogeny class with that same conductor, the rank 1 isogeny
class is given the label ``$a$.''  That seems highly unlikely, considering
that the labeling scheme does not appear to have any reference
to the rank.

In Table~\ref{tab:ECrank2} we give the labels and the number of
isogeny classes with a given conductor, for the first few elliptic
curves of rank 2.  We have omitted those cases where there is only
one isogeny class with that conductor, such as the first case
of rank~2: $389.a$.

\begin{table}
\[
\begin{array}{c|ccccccccccccccc}
\text{class}&446.a&571.a&664.a&681.a&718.a&794.a&817.a&916.a&994.a\\
\hline
\#N&4&2&3&5&3&4&2&5&11\\
\end{array}
\]

\caption{\sf \label{tab:ECrank2}
The LMFDB labels of the isogeny classes of the first 9
elliptic curves of rank~2, and the number of isogeny classes
$\#N$ with conductor $N$,
for those cases where there is more than one
isogeny class for that conductor.
}
\end{table}

As in the first 11 entries of Table~\ref{tab:ECrank},
every curve in Table~\ref{tab:ECrank2} has isogeny
class ``$a$.''  In fact, this phenomenon for rank 2
continues until conductor 1147, where isogeny class ``$b$''
has rank~2.
A similar phenomenon occurs for rank~3, which the interested
reader can explore in the LMFDB.
We will see that this has a similar explanation to our previous
observations.

\section{L-functions and the explicit formula}\label{sec:explicit}

All the observations in Section~\ref{sec:data} have the same underlying
cause: all are consequences of the following general
principle:

\begin{center}
\emph{L-functions which barely exist tend to have negative coefficients.}
\end{center}

In this section we describe what we mean by ``L-function'' and
the sense in which an \hbox{L-function} can ``barely exist.''
We then show how the above principle explains our observations.


\subsection{$L$-functions}\label{sec:Lfunctions}

By an \emph{$L$-function}, we mean 
a Dirichlet series with a functional equation and an Euler product.
Furthermore, we assume the Ramanujan-Petersson conjecture and the
Generalized Riemann Hypothesis.  This means that we can write the
$L$-function as
a Dirichlet series
\begin{equation}\label{eqn:ds}
L(s) = \sum_{n=1}^\infty \frac{a(n)}{n^s}
\end{equation}
where $a(n)\ll n^\delta$ for any $\delta>0$, which has an Euler
product
\begin{equation}\label{eqn:ep}
L(s)=\prod_p L_p(p^{-s})^{-1},
\end{equation}
where $L_p$ is a polynomial, and the product is over the primes.
As is usual in the theory of L-functions, $s=\sigma + i t$ is a
complex variable.
We assume the Dirichlet coefficients are real,
so the L-function satisfies 
a functional equation which can be written in the form
\begin{equation}\label{eqn:fe}
    \Lambda(s) =  Q^s \prod_{j=1}^d \Gamma\left(\frac{s}{2} + \mu_j\right) L(s) 
= \varepsilon {\Lambda(1 - {s})}.
\end{equation}
Here $\varepsilon=\pm 1$ and we assume that $\mu_j\ge 0$ and~$Q> 0$.
The number $d$ is called the \emph{degree} of the $L$-function, which
for all but finitely many $p$ is also the degree of the polynomial~$L_p$.
The L-functions associated to the arithmetic objects considered in this
paper are conjectured to satisfy the 
Ramanujan-Petersson bound, which asserts that the polynomials $L_p$ have
all their zeros on or outside the unit circle, so the 
Dirichlet coefficients actually satisfy the more precise bound
\begin{equation}\label{eqn:rama}
| a(p) | \le d,
\end{equation}
for $p$ prime.
Also, the L-functions considered in this paper are conjectured to
satisfy the analogue of the Riemann Hypothesis, which says that
the zeros of the L-function in the strip $0<\sigma<1$ have the
form $\rho = \frac12 + i \gamma$ with $\gamma\in\R$.

A simple but powerful tool in the study of L-functions is
Weil's explicit formula.

\begin{lemma}\label{lem:weil} Suppose that $L(s)$ has a Dirichlet series
expansion
\eqref{eqn:ds} which continues to an entire function such that
\begin{equation}
    \Lambda(s) =
		Q^s \prod_{j=1}^d \Gamma\left(\frac{s}{2} + \mu_j\right) L(s)
	= \varepsilon {\Lambda(1-{s})}
\end{equation}
is entire and satisfies the mild growth condition
$L(\sigma + it) \ll |t^A|$ for some $A>0$, uniformly in $t$ for bounded~$\sigma$.
Let $f(s)$ be an even function which is 
holomorphic in a horizontal strip $-(1/2 + \delta) < Im(s) < 1/2 + \delta$ with $f(s) \ll \min(1, |s|^{-(1+\epsilon)})$
in this region, and suppose that $f(x)$ is real-valued for real $x$.
Suppose also that the Fourier transform
of $f$, defined by
\[
    \hat f(x) = \int_{-\infty}^\infty f(u)e^{-2\pi i u x} dx,
\]
is such that
\[
    \sum_{n=1}^{\infty} \frac{c(n)}{n^{1/2}} \hat{f} \left( \frac{\log{n}}{2 \pi} \right) 
\]
converges absolutely, where $c(n)$ is defined by
\begin{equation}
\frac{L'}{L}(s) = \sum_{n=1}^{\infty} \frac{c(n)}{n^s} .
\end{equation}
Then
\begin{align} \label{weil}
\sum_{\gamma} f(\gamma) =\mathstrut &  \frac{\widehat{f}(0)}{\pi} \log{Q} + \frac{1}{2 \pi}
\sum_{j=1}^d \ell(\mu_j, f) 
+ \frac{1}{\pi}
\sum_{n=1}^{\infty} \frac{c(n)}{n^{1/2}} \hat{f} \left( \frac{\log{n}}{2 \pi} \right) ,
\end{align}
where
\begin{equation}\label{ellterm}
\ell(\mu, f) = Re\ \left\{\int_\R \frac{\Gamma'}{\Gamma} \left( \frac{1}{2} \left( \frac{1}{2} + i t \right) + \mu \right) f(t) dt\right\} - \hat f(0)\log \pi ,
\end{equation}
and the sum $\sum_\gamma$ runs over all non-trivial zeroes of $L(s)$.
\end{lemma}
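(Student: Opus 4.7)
The plan is to write $\sum_\gamma f(\gamma)$ as a sum of residues of $\frac{\Lambda'}{\Lambda}(s)\, h(s)$ on a suitable contour and then evaluate that integral piece by piece, using the decomposition of $\log\Lambda$ into conductor, gamma, and Euler-product parts. The bookkeeping device is the auxiliary function $h(s) = f\bigl(-i(s-\tfrac12)\bigr)$: by the holomorphy hypothesis on $f$, $h$ is holomorphic in the vertical strip $-\delta < \mathrm{Re}(s) < 1+\delta$; it satisfies $h(\tfrac12 + i\gamma) = f(\gamma)$; and because $f$ is even, it satisfies the symmetry $h(1-s) = h(s)$.

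First I would apply the residue theorem to $\frac{1}{2\pi i}\oint_C \frac{\Lambda'}{\Lambda}(s)\,h(s)\,ds$ on a tall rectangle $C$ with vertical sides $\sigma = 1+\tfrac{\delta}{2}$ and $\sigma = -\tfrac{\delta}{2}$ and horizontal sides at $t = \pm T$. Since $\Lambda$ is entire, the poles of the integrand inside $C$ are exactly the non-trivial zeros of $L$, each contributing $h(\rho) = f(\gamma)$. The horizontal segments vanish as $T \to \infty$ by the decay $f(s) \ll |s|^{-(1+\epsilon)}$, the polynomial growth hypothesis on $L$, and Stirling for $\Gamma'/\Gamma$. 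On the left vertical line I would substitute $w = 1-s$ and invoke $\frac{\Lambda'}{\Lambda}(s) = -\frac{\Lambda'}{\Lambda}(1-s)$ together with $h(1-w) = h(w)$, folding the two vertical integrals into one and producing
\[
    \sum_\gamma f(\gamma) \;=\; \frac{1}{\pi i}\int_{(1+\delta/2)} \frac{\Lambda'}{\Lambda}(s)\,h(s)\,ds.
\]

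Next I would split $\frac{\Lambda'}{\Lambda}(s) = \log Q + \tfrac12\sum_{j=1}^d \frac{\Gamma'}{\Gamma}\bigl(\tfrac{s}{2}+\mu_j\bigr) + \frac{L'}{L}(s)$ and evaluate each term by shifting the contour to the critical line $\sigma = \tfrac12$ (no poles are crossed, since $\Gamma'/\Gamma(s/2+\mu_j)$ has its poles at $s \le -2\mu_j \le 0$). On $\sigma = \tfrac12$ one has $h(\tfrac12+it) = f(t)$ and $ds = i\,dt$, so the constant piece produces $\frac{\hat f(0)\log Q}{\pi}$; the gamma piece becomes $\frac{1}{2\pi}\sum_j \int_\R \frac{\Gamma'}{\Gamma}\bigl(\tfrac14+\tfrac{it}{2}+\mu_j\bigr)f(t)\,dt$, whose imaginary part is an odd integrand against the even function $f$ and hence vanishes (justifying the $\mathrm{Re}$ in $\ell(\mu_j, f)$), while the residual $-\hat f(0)\log\pi$ term arises from the standard normalization $\Gamma_{\R}(s) = \pi^{-s/2}\Gamma(s/2)$ absorbed into each archimedean factor; and the Euler-product piece, after expanding $\frac{L'}{L}(s) = \sum_n c(n)n^{-s}$ (absolutely convergent on $\sigma = 1+\tfrac{\delta}{2}$) and interchanging sum with integral, produces $\frac{1}{\pi}\sum_n \frac{c(n)}{n^{1/2}}\hat f(\log n / 2\pi)$ via the Fourier--Mellin identity $\frac{1}{2\pi i}\int_{(1/2)} n^{-s}h(s)\,ds = in^{-1/2}\hat f(\log n/2\pi)$.

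The main obstacles here are analytic rather than algebraic: one needs uniform control of $\Gamma'/\Gamma$ and $L'/L$ on horizontal lines to kill the top and bottom of the rectangle, plus justification for shifting each individual piece down to $\sigma = \tfrac12$, which for $L'/L$ relies on classical Phragm\'en--Lindel\"of estimates and the mild growth hypothesis. The interchange of sum and integral in the Dirichlet-series piece is permitted precisely by the absolute-convergence assumption built into the hypotheses; with those analytic points in place, the rest is routine bookkeeping.
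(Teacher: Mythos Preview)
The paper does not actually prove this lemma; it simply cites Iwaniec--Kowalski, p.~109, with a remark about the Fourier-transform normalization. Your sketch is the standard contour-shifting argument one finds there (and in essentially every treatment of the explicit formula), so you have supplied what the paper defers to the reference.

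Two small remarks on the write-up. First, the Fourier--Mellin identity you quote carries a stray $\tfrac{1}{2\pi i}$: the bare integral $\int_{(1/2)} n^{-s}h(s)\,ds$ already equals $i\,n^{-1/2}\hat f(\log n/2\pi)$, and it is this that combines with your $\tfrac{1}{\pi i}$ prefactor to give the correct $\tfrac{1}{\pi}\sum_n c(n)n^{-1/2}\hat f(\log n/2\pi)$. Second, your appeal to ``the standard normalization $\Gamma_{\mathbb R}(s)=\pi^{-s/2}\Gamma(s/2)$'' to account for the $-\hat f(0)\log\pi$ in $\ell(\mu,f)$ does not match the functional equation as literally written in the lemma, which uses bare $\Gamma(s/2+\mu_j)$ factors and therefore produces no $\log\pi$ term in $\Lambda'/\Lambda$. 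That is a normalization quirk in the stated lemma (the $\pi$ powers are tacitly folded into $Q$, or equivalently the gamma factors are meant to be $\Gamma_{\mathbb R}$) rather than a defect in your argument.
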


\begin{proof}
This can be found in Iwaniec and Kowalski \cite{IK}, page~109, but note that they
use a different
normalization for the Fourier transform.
\end{proof}

The following is a precise version of the principle stated at the beginning of this section.

\begin{theorem}\label{thm:meta}  Fix non-negative real numbers $\mu_1,\ldots,\mu_d$ in \eqref{eqn:fe}.
There exist real numbers $0<Q_0<Q_1$ such that if $0<Q<Q_0$ then
there do not exist any L-functions with functional equation~\eqref{eqn:fe}
satisfying the Ramanujan bound~\eqref{eqn:rama} and the Riemann Hypothesis.
And if $Q_0 < Q < Q_1$ then any L-function satisfying those three conditions
must have $a(2) < 0$.
\end{theorem}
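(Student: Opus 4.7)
The plan is to apply Weil's explicit formula (Lemma~\ref{lem:weil}) with a test function $f$ that is non-negative on $\R$, has non-negative Fourier transform $\hat f$, and whose $\hat f$ is supported so narrowly that only the $n=2$ contribution survives on the arithmetic side. A concrete choice is the Fej\'er-type kernel $f(u)=\sin^2(\pi T u)/(\pi u)^2$, for which $\hat f(x)=\max(0,T-|x|)$; taking $T$ in the range $\log 2/(2\pi)<T\le\log 3/(2\pi)$ makes $\hat f(\log 2/(2\pi))>0$ while $\hat f(\log n/(2\pi))=0$ for every integer $n\ge 3$ (since $\log 4>\log 3$, the $n=4$ term is killed as well).

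Under the Riemann Hypothesis every non-trivial zero has $\gamma\in\R$, so $\sum_\gamma f(\gamma)\ge 0$. Logarithmic differentiation of~\eqref{eqn:ep} at $p=2$ gives $c(2)=-(\log 2)\,a(2)$, and by the support choice this is the only surviving $c(n)$. The explicit formula therefore collapses to the linear inequality
\[
0 \;\le\; A\log Q \;+\; B \;-\; a(2)\,C,
\]
where $A=T/\pi$, $B=(2\pi)^{-1}\sum_{j=1}^d\ell(\mu_j,f)$ depends only on the fixed data $\mu_1,\dots,\mu_d$ and on $f$, and $C=(\log 2)/(\pi\sqrt{2})\cdot\hat f(\log 2/(2\pi))>0$. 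Equivalently, $a(2)\le(A\log Q+B)/C$.

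Combining this with the Ramanujan bound $a(2)\ge -d$ from~\eqref{eqn:rama} forces $Q\ge Q_0:=\exp(-(B+dC)/A)$, so no admissible L-function can exist for $Q<Q_0$. Requiring instead that $a(2)\ge 0$ forces $Q\ge Q_1:=\exp(-B/A)$, and since $C>0$ one has $Q_0<Q_1$. Thus for $Q_0<Q<Q_1$ the L-function is not yet excluded but $a(2)<0$ is forced, which is the claim. The main technical obstacle I expect is verifying that the Fej\'er $f$ satisfies every analytic hypothesis of Lemma~\ref{lem:weil} --- holomorphy in $-(1/2+\delta)<\mathrm{Im}(s)<1/2+\delta$, the $\min(1,|s|^{-(1+\epsilon)})$ decay, and the absolute convergence on the arithmetic side --- together with bounding $\ell(\mu_j,f)$ for each fixed non-negative $\mu_j$ via standard digamma estimates. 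These are routine but need to be checked for the chosen $T$; the arithmetic portion of the argument itself is a short computation.
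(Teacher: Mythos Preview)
Your proof is correct and follows essentially the same route as the paper: the paper chooses the specific Fej\'er kernel $f(x)=\tfrac{1}{2\pi}\sin^2(x/2)/(x/2)^2$ with $\hat f(x)=\max(0,1-2\pi|x|)$, which is exactly your construction at a particular value of $T$ in the allowed range, and then argues the existence of $Q_0$ and $Q_1$ qualitatively rather than writing down your explicit formulas $Q_0=\exp(-(B+dC)/A)$ and $Q_1=\exp(-B/A)$. The analytic hypotheses on $f$ that you flag as needing verification are indeed routine (boundedness of $\sin(\pi T s)$ in horizontal strips gives $|s|^{-2}$ decay, and the arithmetic sum is finite), so there is no gap.
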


In other words, if an L-function is just barely able to exist, meaning that 
the parameter $Q$ in its functional equation is only slightly larger
than the minimum threshold imposed by the $\mu_j$, then it must have
a negative second Dirichlet coefficient.

The proof involves choosing an appropriate test function in the explicit formula.
We will make a simple choice in order to illustrate the method, making
no attempt to obtain optimal results.  

\begin{proof}[Proof of Theorem~\ref{thm:meta}]
In the explicit formula, choose the function
\begin{align}
f(x) &= \frac{1}{2\pi} \frac{\sin^2(x/2)}{(x/2)^2}, \\
\intertext{which satisfies}
\hat{f}(x) &= 
\begin{cases} 1-2 \pi |x| & \text{if} -\tfrac{1}{2\pi}<x<\tfrac{1}{2\pi} \cr
              0 & \text{otherwise}.
\end{cases}
\end{align}
The key properties we require are that $f$ is non-negative, and the
support of $\hat{f}$ contains $\log(2)/(2\pi)$ but not $\log(n)/(2\pi)$
for any integer $n>2$, and $\hat{f}(\log(2)/(2\pi))>0$.

Substituting into \eqref{weil} we obtain
\begin{align} \label{weil2}
\sum_{\gamma} f(\gamma) =\mathstrut &  
\frac{1}{\pi} \log{Q} + \frac{1}{2 \pi}
\sum_{j=1}^d \ell(\mu_j, f)
+ 
0.069066
\, c(2).
\end{align}

The sum on the left side is non-negative, and the sum over $\ell(\mu_j, f)$ is a
constant depending on the $\mu_j$ and our choice of $f$.
The Ramanujan bound on the Dirichlet coefficients gives us a bound on $c(2)$, coming from the
simple fact that for $p$ prime, $c(p) = - a(p) \log(p)$ where $a(p)$ is the $p$th Dirichlet coefficient
of the L-function.

Thus, if $Q$ is sufficiently small then the right side must be negative, which
is a contradiction.  That proves the
existence of the number $Q_0$.  And if $Q$ is slightly larger, then the only way for the
right side to be non-negative is if $0.069066 \, c(2)$ is positive; this is
equivalent to $a(2)$ being negative.  That proves the
existence of the number $Q_1$.
\end{proof}

To complete our explanation of the prevalence of $a(2)<0$ for cusp forms on the
edge of existence, we note that for the L-functions associated to
$F\in S_k^{new}(N)$, the parameters $Q$ and $\mu$ in the functional equation are given by
$ Q= N/\pi$ and $\{\mu_1,\mu_2\} = \{\frac{k-1}{4},\frac{k+1}{4}\}$.
Applying Theorem~\ref{thm:meta} with $k$ fixed, we see that if $N$ is sufficiently
small then $S_k^{new}(N)$ must be empty, and if $N$ is slightly larger and
$S_k^{new}(N)$ is nonempty, then $a(2)<0$.  

%

For the elliptic curves of higher rank, we note that, according to the Birch and
Swinnerton-Dyer conjecture, the L-function of a rank $r$ elliptic curve
has an order $r$ zero at the critical point.
This translates 
to $r$ zeros with $\gamma=0$ in~\eqref{weil}.
Those $\gamma=0$ terms add a large positive contribution to
the left side of \eqref{weil}, equivalently \eqref{weil2}.  Thus, the lower bound on
possible levels $N=\pi Q$ for a rank $r$ elliptic curve is an increasing
function of $r$. And just as in the proof of Theorem~\ref{thm:meta}, those
$Q$ which are slightly larger than the minimum must be accompanied by a negative
value for $a(2)$.  Finally, since the isogeny classes of elliptic curves are
ordered lexicographically by the L-function (equivalently, modular form)
coefficients, those with a negative $a(2)$ are more likely to be listed first,
receiving ``a'' as the label of their isogeny class.

\subsection{Small weight}\label{sec:small weight}

For the larger weights in Table~\ref{fig:GL2}, the $a(2)<0$ phenomenon only persists
for a small strip around the edge of the region where the cusp forms
are able to exist.  However, for weight $k=2$ we find that $a(2)<0$
for all $N\le 21$.  This also has a simple explanation coming from the
explicit formula.

For a weight $k$ cusp form, the parameters $\{\mu_1, \mu_2\}$ in the
functional equation are $\{\tfrac{k-1}{4},\tfrac{k+1}{4}\}$.
In Figure~\ref{fig:digamma}
we plot the factor
\begin{equation}\label{eqn:digamma}
Re\left[  \frac{\Gamma'}{\Gamma}\left(\frac12\left(\frac12+ i t \right)+\mu\right) \right]
\end{equation}
which occurs in the $\ell(\mu,f)$ term in the explicit formula,
\eqref{ellterm}, for various $\mu$.

\begin{figure}[htp]
\begin{center}
\scalebox{1}[0.8]{\includegraphics{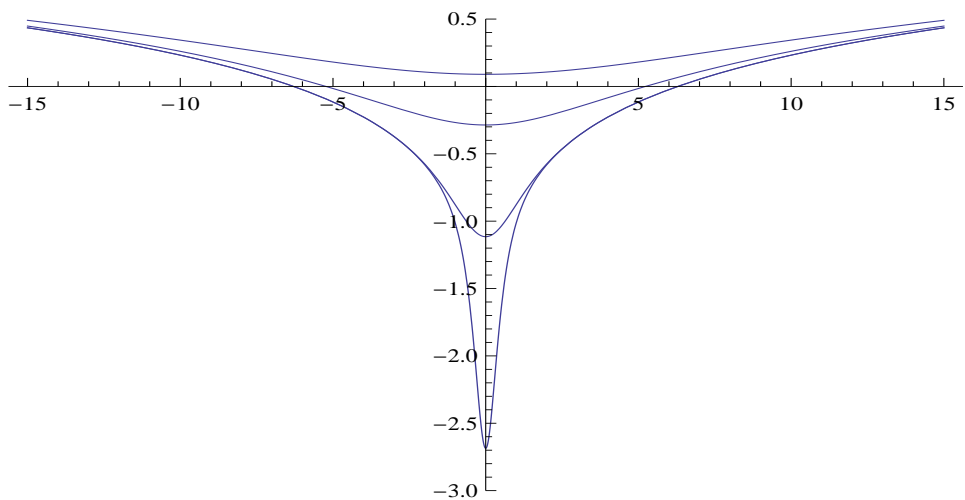}}

\caption{\sf
Plots of the real part of $\frac{\Gamma'}{\Gamma}\left(\frac12\left(\frac12+ i t \right)+\mu\right)$
for $\mu = 0$, $1$, $4$, and $8$, where smaller values of $\mu$
correspond to lower graphs.
} \label{fig:digamma}
\end{center}
\end{figure}

In the integrand of \eqref{ellterm},
the function in Figure~\ref{fig:digamma} is multiplied by $f(t)$, which is positive and has most of
its support near $t=0$. Therefore the integrand is mostly negative when
the weight $k$ is small, so the contribution of $\ell(\mu,f)$ to the right
side of the explicit
formula will be negative.  Thus, for small $k$ the value of $Q = \pi N$ must be larger 
in order 
for the right side of the explicit formula to be non-negative.
And as before, for $Q$ slightly above the threshold value
it is necessary for $a(2)$ to be negative.  
Let $-\delta$ be the negative amount which $a(2)$ could contribute to the right side.
Since it is $\log Q$ that contributes
to the right side, 
one needs 
$\log Q$ to increase by $\delta$ before it is possible for $a(2)$ to be positive.
But if $Q$ is larger, as it must be if $k$ is small, then $\log Q$ 
increases more slowly, so one requires a larger increase in $Q$ to increase $\log Q$ by~$\delta$.
This suggests that if $k$ is small, once $Q$ is large enough for the curve (or the L-function) to possibly exist,
there is a wider range of $Q$ values which require $a(2)$ to be negative.
That explains the large number of $\bullet$ at the top of
Table~\ref{fig:GL2}.

It is a curious phenomenon that elliptic curves of a given rank tend to appear
with conductors that are only slightly larger (on a logarithmic scale) than the minimum forced 
upon them
by the explicit formula.  This means that many of the initial coefficients,
and not just $a(2)$, must be negative.  This fact imparts some surprising properties
on the L-functions.  See Michael Rubinstein's paper~\cite{Rub} and its references for further
discussion, data, and plots.


\end{document}